\documentclass[12pt]{amsart}
\usepackage{amscd,amsmath,amsthm,amssymb}
\usepackage{tikz}

 \usepackage{multicol}
 
\tikzstyle{punkt}=[circle, fill=black, minimum size=1mm,inner sep=0pt, draw]

\usepackage{amsfonts,amssymb,amscd,amsmath,enumerate,verbatim}

\usetikzlibrary{arrows}
\input xy
\xyoption{all}
\def\frk{\mathfrak}               

\def\Phi{{\frk N}}
\def\opn#1#2{\def#1{\operatorname{#2}}} 
\opn\chara{char} \opn\length{\ell} \opn\pd{pd} \opn\rk{rk}
\opn\projdim{proj\,dim} \opn\injdim{inj\,dim} \opn\rank{rank}
\opn\depth{depth} \opn\grade{grade} \opn\height{height}
\opn\embdim{emb\,dim} \opn\codim{codim}

\opn\Tr{Tr} \opn\bigrank{big\,rank}
\opn\superheight{superheight}\opn\lcm{lcm}
\opn\trdeg{tr\,deg}
\opn\reg{reg} \opn\lreg{lreg} \opn\ini{in} \opn\lpd{lpd}
\opn\size{size}\opn{\mult}{mult}
\opn\div{div} \opn\Div{Div} \opn\cl{cl} \opn\Cl{Cl}
\opn\Spec{Spec} \opn\Supp{Supp} \opn\supp{supp} \opn\Sing{Sing}
\opn\Ass{Ass} \opn\Min{Min}
\opn\Ann{Ann} \opn\Rad{Rad} \opn\Soc{Soc}
\opn\Syz{Syz} \opn\Im{Im} \opn\Ker{Ker} \opn\Coker{Coker}
\opn\Am{Am} \opn\Hom{Hom} \opn\Tor{Tor} \opn\Ext{Ext}
\opn\End{End} \opn\Aut{Aut} \opn\id{id} \opn\ini{in}

\opn\nat{nat}
\opn\pff{pf}
\opn\Pf{Pf} \opn\GL{GL} \opn\SL{SL} \opn\mod{mod} \opn\ord{ord}
\opn\Gin{Gin}
\opn\Hilb{Hilb}\opn\adeg{adeg}\opn\std{std}\opn\ip{infpt}
\opn\Pol{Pol}
\opn\sat{sat}
\opn\Var{Var}
\opn\Gen{Gen}

\opn\aff{aff} \opn\con{conv} \opn\relint{relint} \opn\st{st}
\opn\lk{lk} \opn\cn{cn} \opn\core{core} \opn\vol{vol}
\opn\link{link} \opn\star{star}
\opn\gr{gr}

\def\pot#1#2{#1[\kern-0.28ex[#2]\kern-0.28ex]}

\opn\dirlim{\underrightarrow{\lim}}
\opn\inivlim{\underleftarrow{\lim}}
\def\Implies{\ifmmode\Longrightarrow \else
        \unskip${}\Longrightarrow{}$\ignorespaces\fi}
\def\implies{\ifmmode\Rightarrow \else
        \unskip${}\Rightarrow{}$\ignorespaces\fi}
\def\iff{\ifmmode\Longleftrightarrow \else
        \unskip${}\Longleftrightarrow{}$\ignorespaces\fi}

\let\:=\colon
\newtheorem{Theorem}{Theorem}[section]
\newtheorem{Lemma}[Theorem]{Lemma}
\newtheorem{Corollary}[Theorem]{Corollary}

\newtheorem{Example}[Theorem]{Example}

\newtheorem{Question}[Theorem]{Question}
\let\epsilon\varepsilon
\let\phi=\varphi
\let\kappa=\varkappa
\def\qed{\ifhmode\textqed\fi
      \ifmmode\ifinner\quad\qedsymbol\else\dispqed\fi\fi}
\def\textqed{\unskip\nobreak\penalty50
       \hskip2em\hbox{}\nobreak\hfil\qedsymbol
       \parfillskip=0pt \finalhyphendemerits=0}
\def\dispqed{\rlap{\qquad\qedsymbol}}

\opn\dist{dist}
\def\pnt{{\raise0.5mm\hbox{\large\bf.}}}

\opn\Lex{Lex}
\opn\diam{diam}

\begin{document}
	
\title{Depth and Krull dimension of Binomial edge ideals}
\author[T.~Hibi]{Takayuki Hibi}
\address[Takayuki Hibi]
{Department of Pure and Applied Mathematics, 
	Graduate School of Information Science and Technology, 
	The University of Osaka, 
	Suita, Osaka 565-0871, Japan}
\email{hibi@math.sci.osaka-u.ac.jp}
\author[S.~Saeedi~Madani]{Sara Saeedi Madani}
\address[Sara Saeedi Madani]
{Department of Mathematics and Computer Science, Amirkabir University of Technology (Tehran Polytechnic), Tehran, Iran, and School of Mathematics, Institute for Research in Fundamental Sciences (IPM), P.O. Box: 19395-5746, Tehran, Iran} 
\email{sarasaeedi@aut.ac.ir, sarasaeedim@gmail.com}
\subjclass[2020]{05E40, 13C15}
\keywords{Binomial edge ideal, depth, Krull dimension}
\thanks{
The research of the second author was in part supported by a grant from IPM (No. 1405130118).
}

\begin{abstract}
 Let $J_G$ denote the binomial edge ideal of a finite graph $G$ in the polynomial ring $S$. We determine all triples $(n,t,d)$ with $n\geq 3$ for which there exists a finite connected graph $G$ on $n$ vertices with $\depth(S/J_G)=t$ and $\dim(S/J_G)=d$.    	
\end{abstract}

\maketitle

\section*{Introduction}
Every graph is a finite graph having no loop, no multiple edge and no isolated vertex.  Let $G$ be a graph on the vertex set $V(G)=[n]=\{1,\ldots,n\}$ and $E(G)$ the set of edges of $G$.  An \emph{induced subgraph} of $G$ on $W\subseteq [n]$ is a subgraph $G|_{W}$ whose vertex set is $W$ and its edge set consists of those edges $\{i,j\} \in E(G)$ with $\{i,j\} \subseteq W$.  A vertex $i$ of $G$ is called a \emph{cut point} of $G$ if $G|_{[n]-\{i\}}$ has more connected components than $G$.

Let $S=K[x_1, \ldots, x_n, y_1, \ldots, y_n]$ denote the polynomial ring in $2n$ variables over a field $K$.  The {\em binomial edge ideal}, introduced by \cite{HHHKR} and \cite{O} independently, of $G$ is the ideal $J_G$ of $S$ generated by the binomials $x_iy_j - x_jy_i$ with $i < j$ and $\{i,j\} \in E(G)$.     

Given $T\subset [n]$, a prime ideal $P_T(G)$ associated with $T$ is defined to be 
\[
P_T(G)=(x_i,y_i: i\in T)+J_{\Tilde{G_1}}+\cdots+J_{\Tilde{G}_{c_G(T)}},
\]
where $G_1,\ldots,G_{c_G(T)}$ are the connected components of $G|_{[n]-T}$ and where $\Tilde{H}$ is the complete graph on the vertex set of a graph $H$.  We say that $T\subset [n]$ has the \emph{cut point property} if each $i\in T$ is a cut point of the induced graph $G|_{([n]-T)\cup \{i\}}$ of $G$. Let 
\[
\mathcal{C}(G):=\{T\subset [n]: T=\emptyset~\textit{or}~T~\text{has the cut point property}\}.
\]
As was shown in \cite{HHHKR}, the minimal prime ideals of $J_G$ are described as follows:
\[ 
\Min(J_G)=\{P_T(G): T\in \mathcal{C}(G)\}. 
\]

It is known \cite{HHHKR} that 
\begin{equation}
\label{dimension formula}
\dim(S/J_G)=\max \{n-|T|+c_G(T): T\in \mathcal{C}(G)\}.
\end{equation}
In particular, if $G$ is connected and $n\geq 3$, then one has
\[
n+1\leq \dim(S/J_G) \leq 2n-2.
\]

A {\em clique} of $G$ is a complete subgraph of $G$.  A {\em free} vertex of $G$ is a vertex of $G$ which is contained in exactly one maximal clique of $G$.  Let $f(G)$ denote the number of free vertices of $G$.  The {\em length} of a path is the number of edges of the path.  

Let $G$ be a connected graph. The \emph{distance} $\dist_G(i,j)$ of two distinct vertices $i$ and $j$ in $[n]$ is the smallest length of paths connecting $i$ and $j$ in $G$. The \emph{diameter} of $G$ is
$\d(G) = \max\{\dist_G(i,j):i,j \in[n]\}$.
Let $\kappa(G)$ denote the \emph{vertex connectivity} of $G$, i.e. the minimum number of vertices of $G$ whose deletion disconnects $G$.

It is known \cite{BN, RSK2} that if $G$ is non--complete, then one has
\begin{equation}
\label{depth}
f(G) + d(G) \leq \depth (S/J_G) \leq n - \kappa(G) + 2.
\end{equation}
In particular, together with \cite[Theorem~1.1]{EHH}, for any finite connected graph $G$ one has
\[
\depth(S/J_G) \leq n+1.
\]
Furthermore, it is shown \cite[Theorem 5.2]{RSK1} that if $n \geq 3$, then
\[
\depth(S/J_G) \geq 4.
\] 

The purpose of the present paper is to give an answer to the question as follows:  

\begin{Question}[\cite{HS-CW}]
\label{Question}   
 Given integers $t,n,d$ with $4\leq t\leq n+1\leq d\leq 2n-2$, does there exist a finite connected graph $G$ on $n$ vertices for which $\depth(S/J_G)=t$ and $\dim(S/J_G)=d$\,?   
\end{Question}

In Section~\ref{maximum depth section}, we mainly focus on $t=n+1$ which is the maximum possible value for $t$ and show that the answer to Question~\ref{Question} is positive for triples $(n,n+1,d)$ with $n \geq 3$ and $n+1\leq d\leq 2n-2$ (Theorem~\ref{maximum depth}). Moreover, we show that triples $(n,t,2n-2)$ for $n\geq 3$ and $(n,t,2n-3)$ for $n\geq 5$ provide a positive answer to Question~\ref{Question} if and only if $t=n+1$, namely the maximum possible value of $t$ (Corollary~\ref{higher d}). In Section~\ref{small depth section}, we determine all triples $(n,t,d)$ for the remaining cases of $d$ with $n\geq 4$ and $t=4$ as well as $n\geq 5$ and $t=5$, namely two smallest values of $t$, for which Question~\ref{Question} has a positive answer (Example~\ref{n=4}, Theorem~\ref{t=4} and Theorem~\ref{t=5}). Finally, in Section~\ref{other values section}, we give a positive answer to Question~\ref{Question} for all the remaining triples $(n,t,d)$, namely with $6\leq t\leq n$ and $n+1\leq d\leq 2n-4$ (Corollary~\ref{remaining cases}).

\section{The maximum value of $t$ in Question~\ref{Question}}\label{maximum depth section} 

A \emph{chordal} graph is a graph each of whose cycles of lenght $>3$ has a chord.  A \emph{block} graph is a chordal graph for which any two distinct maximal cliques intersect in at most one vertex. In the sequel, we use the following result from \cite{EHH}. 

\begin{Theorem}
	[\cite{EHH}]
    \label{block}
	Let $G$ be a connected block graph with $n$ vertices. Then one has $$\depth(S/J_G)=n+1.$$    
\end{Theorem}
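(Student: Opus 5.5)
Since Theorem~\ref{block} is quoted from \cite{EHH}, we only outline how we would prove it; the two inequalities are handled separately. The upper bound $\depth(S/J_G)\le n+1$ holds for any connected $G$. One route is to pass to a generic initial ideal or to the lex initial ideal $\ini_<(J_G)$, whose depth is a lower bound for... rather, use $\depth(S/J_G)\le \dim S/P$ for every associated prime $P$. Take $P=P_\emptyset(G)=J_{\tilde G}$, the ideal of $2$-minors of a generic $2\times n$ matrix. It is minimal since $G$ is connected, and $\dim S/J_{\tilde G}=n+1$, so the bound follows.

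For the lower bound $\depth(S/J_G)\ge n+1$ we induct on the number $r$ of maximal cliques (blocks) of $G$. If $r=1$, then $G$ is complete and $S/J_G$ is the determinantal ring, which is Cohen--Macaulay of dimension $n+1$. If $r>1$, choose a cut vertex $v$ of $G$ such that one side is a single leaf block $B$. We then use the decomposition of \cite{EHH} at a free-vertex gluing: write $G=G_1\cup G_2$ with $G_1\cap G_2=\{v\}$, where $v$ is a free vertex of both $G_1$ and $G_2$, $G_2=B$ is complete, and $G_1$ is a block graph with fewer blocks. In this situation $S/J_G\cong (S_1/J_{G_1}\otimes_K S_2/J_{G_2})/(x_v-x_v',\,y_v-y_v')$. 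The key claim is that $x_v-x_v',\,y_v-y_v'$ is a regular sequence on the tensor product. This uses that in each factor, when $v$ is free, the element $x_v$ (respectively $y_v$) is a non-zerodivisor modulo the initial ideal, via a Gr\"obner basis argument; equivalently, the variables of a free vertex avoid all associated primes. Depth is additive over $\otimes_K$ and drops by $2$ modulo a regular sequence, so
\[
\depth S/J_G=(n_1+1)+(n_2+1)-2=n_1+n_2=n+1,
\]
since $n=n_1+n_2-1$.

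If the $v$ we need is not free in $G_1$, we would instead argue via the exact sequence
\[
0\to S/J_G\to S/J_{G_v}\oplus S/((x_v,y_v)+J_{G\setminus v})\to S/((x_v,y_v)+J_{G_v\setminus v})\to 0.
\]
Here $G_v$ is obtained by completing the neighbourhood of $v$. Both $G_v$ and $G\setminus v$ are again (unions of) block graphs with fewer blocks, or with $v$ free, and one then applies the depth lemma. The main obstacle is the regular-sequence claim for the gluing along a free vertex. We would settle it by computing a lex Gr\"obner basis of $J_G$ from admissible paths: for a block graph the initial terms avoid $y_v$-divisibility patterns in a way that makes $x_v-x_v'$ and $y_v-y_v'$ behave like fresh linear forms.
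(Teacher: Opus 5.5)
The paper does not prove this statement; it quotes \cite{EHH}. The argument there is in essence your fallback, the Ohtani decomposition that the paper itself runs in the proof of Theorem~\ref{final depth}. Your upper bound is fine. The weak point is that you make the free-vertex gluing the main engine. As you notice, it cannot drive the induction in general: in $K_{1,3}$ the only cut vertex lies in three blocks, so there is no splitting at a vertex free on both sides.

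The justification you give for its key step is also not valid. ``The variables of a free vertex avoid all associated primes'' only says that $x_v$ and $y_v$ are individually non-zerodivisors on each factor. That yields regularity of $x_v-x_v'$ on $A\otimes_K B$, but not of $y_v-y_v'$ modulo it. For example, take $A=K[x,y,z]/((x-y)z)$ and $B=K[x',y']/(x'-y')$. All four variables are non-zerodivisors, yet $(A\otimes_K B)/(x-x',y-y')\cong K[x,z]$ has dimension only one less than $A\otimes_K B$.

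The claim is true for binomial edge ideals, but it needs a real argument, namely a labelling trick. A free vertex is never interior to an admissible path. So if $v$ is labelled last in $G_1$ and first in $G_2$, then $x_v$ does not occur in $\ini_<(J_{G_1})$ and $y_v'$ does not occur in $\ini_<(J_{G_2})$. Take a lex order with $x_v>x_v'$ and $y_v'>y_v$ that restricts to the usual orders on each side. Then the leading terms of the two linear forms are variables not occurring in $\ini_<(J_{G_1}+J_{G_2})=\ini_<(J_{G_1})+\ini_<(J_{G_2})$, and regularity lifts from the initial ideal.

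None of this is needed, because your exact sequence works at \emph{every} cut vertex $v$; what is missing is the count. Ohtani's identity $J_G=J_{G_v}\cap((x_v,y_v)+J_{G\setminus v})$ holds because $J_G$ is radical and each $P_T(G)$ contains the first ideal if $v\notin T$ and the second if $v\in T$.

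Suppose $v$ lies in $k\ge 2$ blocks. Then $G_v$ and $G_v\setminus v$ are connected block graphs with $r-k+1<r$ blocks, on $n$ and $n-1$ vertices. Also $G\setminus v$ is a disjoint union of $c\ge 2$ connected block graphs with fewer blocks; an isolated vertex contributes $K[x,y]$, of depth $2$.

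By induction and additivity of depth over $\otimes_K$, the depths are as follows:
$S/J_{G_v}$ has depth $n+1$;
$S/((x_v,y_v)+J_{G\setminus v})$ has depth $n-1+c\ge n+1$;
$S/((x_v,y_v)+J_{G_v\setminus v})$ has depth $n$.
So the middle term of your sequence has depth $n+1>n$, and Lemma~\ref{depth lemma} gives $\depth S/J_G=n+1$. Written out this way, with the gluing branch dropped, your proof is complete.
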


Our discussion starts with the maximum possible value for the depth in the next theorem.  Let $K_n$ denote the complete graph on $n$ vertices. A vertex is called a {\em leaf} if it is contained in exactly one edge.  

\begin{Theorem}
    \label{maximum depth}
    Let $n,d$ be integers with $n\geq 3$ and $n+1 \leq d \leq 2n-2$. Then there exists a finite connected graph on $n$ vertices with $\depth(S/J_G)=n+1$ and $\dim(S/J_G)=d$. 
\end{Theorem}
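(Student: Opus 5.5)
The plan is to realize every admissible $d$ by a connected block graph and then invoke Theorem~\ref{block}, which gives $\depth(S/J_G)=n+1$ for every connected block graph on $n$ vertices. What remains is to compute $\dim(S/J_G)$ with formula~(\ref{dimension formula}) for a suitable family of block graphs and check that it attains each value $d$ with $n+1\le d\le 2n-2$.

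The family will be the following. Fix $k$ with $0\le k\le n-3$ and let $G_k$ be the graph obtained from the complete graph $K_{n-k}$ by attaching $k$ pendant edges (leaves) to $k$ distinct vertices of the clique. Since $n-k\ge 3$, there are enough clique vertices, and $G_k$ is a connected block graph on $n$ vertices. Its maximal cliques are $K_{n-k}$ and $k$ edges, and any two of them meet in at most one vertex. For $k=0$ this is $K_n$, with $\dim(S/J_{K_n})=n+1$, since $\mathcal{C}(K_n)=\{\emptyset\}$. The aim is to show $\dim(S/J_{G_k})=n+1+k$, so that $k=0,\ldots,n-3$ covers $d=n+1,\ldots,2n-2$. (For $d=2n-2$ one could instead take a path or star, but $G_{n-3}$ already works.)

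To compute the dimension, note that the cut points of $G_k$ are exactly the $k$ clique vertices $v_1,\dots,v_k$ carrying a leaf. Let $T\subseteq\{v_1,\dots,v_k\}$ have size $s$. Removing $T$ isolates $s$ leaves and leaves the rest connected; the rest is nonempty because $n-k\ge 3>$ the number of clique vertices in $T$ unless all clique vertices are removed, which cannot happen. So $c(T)=s+1$. Each $v_i\in T$ is a cut point of $G|_{([n]-T)\cup\{v_i\}}$, since adding $v_i$ back reconnects its leaf. Hence $T\in\mathcal{C}(G_k)$, and it contributes
\[
n-s+s+1=n+1 .
\]
Taking $T=\{v_1,\dots,v_k\}$ gives $n-k+(k+1)=n+1$, not $n+1+k$, so this family is wrong and must be adjusted.

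The fix is to make removing one vertex create many components. Let $H_k$ be $K_{n-k}$ with $k$ leaves all attached to a single clique vertex $v$. With $T=\{v\}$ we get $c=k+1$, so the contribution is $n-1+k+1=n+k$, which gives $d=n+k$ for $k\ge1$. Together with $K_n$ for $d=n+1$, this covers $n+1\le d\le 2n-3$, since $n-k\ge 3$ forces $k\le n-3$. The only other admissible $T$ is $\emptyset$, which contributes $n+1$, so the maximum is $n+k$. The value $d=2n-2$ comes from the star $K_{1,n-1}$, a tree (hence a block graph), where $T$ equal to the center gives $n-1+(n-1)=2n-2$. The case $n=3$ is just the path. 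The main obstacle is this bookkeeping: checking that in each graph the other sets in $\mathcal{C}(G)$ do not exceed the target value, which amounts to listing the cut points.
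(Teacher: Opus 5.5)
Your final argument is correct and is exactly the paper's proof via Theorem~\ref{block} and \eqref{dimension formula}: $K_n$ for $d=n+1$, then $K_{n-k}$ with $k$ leaves attached at one vertex $v$, where $\mathcal{C}=\{\emptyset,\{v\}\}$ gives $\dim(S/J_G)=n+k$, and the star $K_{1,n-1}$ for $d=2n-2$; the star is just the case $k=n-2$ of the same family, so the restriction $n-k\ge 3$ is unnecessary. In a write-up, drop the abandoned first family and the parenthetical claim that a path gives $d=2n-2$, which is false for $n\ge 4$: every admissible $T$ in a path contributes $n+1$, consistent with $J_{P_n}$ being a complete intersection.
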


\begin{proof}
Let $d=n+1$.  It follows from Theorem~\ref{block} and $\mathcal{C}(G)=\{\emptyset\}$ that $G=K_n$ satisfies
$\depth(S/J_G)=\dim(S/J_G)=n+1$. 
 
Let $d=n+\ell$, where $2\leq \ell\leq n-2$. Define $G$ to be the graph obtained from the complete graph $K_{n-\ell}$ by attaching $\ell$ leaves to one of the vertices of $K_{n-\ell}$, say~$i$. Since $G$ is a connected block graph, it follows from Theorem~\ref{block} that $\depth(S/J_G)=n+1$.  Since $\mathcal{C}(G)=\{\emptyset, \{i\}\}$. 
It follows from \eqref{dimension formula} that 
 \[
 \dim(S/J_G)=\max\{n+1, n+\ell\}=n+\ell,
 \]    
 since $\ell\geq 2$. This completes the proof.
 \hspace{7cm}
\end{proof}

In the next two lemmata, we focus on two higher values of $d$, namely $2n-3$ and $2n-2$.  Let $K_{1,s-1}$ denote the star graph on $[s]$ with the edges $\{1,2\},\{1,3\},\ldots, \{1,s\}$.  

\begin{Lemma}
	\label{d=2n-2}
	Let $n,t$ be integers with $n\geq 4$ and $t=4,\ldots,n$. Then there is no finite connected graph on $n$ vertices with $\depth(S/J_G)=t$ and $\dim(S/J_G)=2n-2$. 
\end{Lemma}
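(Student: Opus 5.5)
Our plan is to show that $\dim(S/J_G)=2n-2$ forces $G$ to be a connected block graph, so that Theorem~\ref{block} gives $\depth(S/J_G)=n+1\neq t$. The argument would run in three steps.

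\emph{Step 1: the dimension forces a large cut set.} By \eqref{dimension formula} there is $T\in\mathcal{C}(G)$ with $n-|T|+c_G(T)=2n-2$, that is, $c_G(T)=n-2+|T|$. Since $G|_{[n]-T}$ has at most $n-|T|$ components, we get $n-2+|T|\le n-|T|$, hence $|T|\le 1$. The case $T=\emptyset$ gives $c=1$ and $n-|T|+c=n+1<2n-2$ because $n\ge 4$, so this case is impossible. Therefore $T=\{i\}$ with $c_G(\{i\})=n-1$. Then $G|_{[n]-\{i\}}$ has $n-1$ vertices and $n-1$ components, so it has no edges. Since $G$ is connected and has no isolated vertex, every other vertex is adjacent to $i$. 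Hence $G=K_{1,n-1}$, the star graph.

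\emph{Step 2: the star is a block graph.} $K_{1,n-1}$ is a tree, so it is a connected block graph: it is chordal, and its maximal cliques are its edges, any two of which share at most one vertex. Theorem~\ref{block} then gives $\depth(S/J_G)=n+1$. This is outside the range $4\le t\le n$, so no graph with the required depth and dimension exists.

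The main point to verify is the counting in Step 1, namely that $|T|\ge 2$ cannot occur. This follows from the bound $c_G(T)\le n-|T|$: it yields $2|T|\le 2$. No other obstacle arises.
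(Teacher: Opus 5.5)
Your proof is correct and follows essentially the same route as the paper: use \eqref{dimension formula} to get $T\in\mathcal{C}(G)$ with $c_G(T)=n-2+|T|$, force $T=\{i\}$ with $c_G(T)=n-1$, conclude $G=K_{1,n-1}$, and apply Theorem~\ref{block}. The only cosmetic difference is that you bound $|T|$ directly via $c_G(T)\le n-|T|$. The paper instead first gets $|T|\ge 1$ and then uses $c_G(T)\le n-1$ to squeeze out $c_G(T)=n-1$ and $|T|=1$.
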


\begin{proof}
  Suppose on the contrary that there exists a finite connected graph $G$ on $[n]$ with $\depth(S/J_G)=t$ and $\dim(S/J_G)=2n-2$. Since $\dim(S/J_G)=2n-2$, it follows from \eqref{dimension formula} that there is a subset $T$ of $[n]$ for which $n-|T|+c_G(T)=2n-2$, and hence $n+|T|=c_G(T)+2$. Since $n\geq 4$, one has $2n-2>n+1$ which implies that $|T|\geq 1$. Since $c_G(T)\leq n-1$, it follows that 
  $$c_G(T)+2\leq n+1\leq n+|T|=c_G(T)+2,$$ and hence $c_G(T)=n-1$. Therefore, $|T|=1$ and $G$ is the star graph $K_{1,n-1}$ which is a block graph. One has $\depth(S/J_G)=n+1$ (Theorem~\ref{block}), a contradiction.      	
\end{proof}

If $i\in [n]$, then for simplicity we denote the induced subgraph of $G$ on $[n]-\{i\}$, by $G-i$.  The {\em join} $G*G'$ of two graphs $G$ and $G'$ with $V(G) \cap V(G') = \emptyset$ is the graph on $V(G)\cup V(G')$ with
\[
E(G*G')=E(G)\cup E(G')\cup \{\{i,j\}: i\in V(G), \, j\in V(G')\}.
\] 

\begin{Lemma}
	\label{d=2n-3}
	Let $n,t$ be integers with $n\geq 5$ and $t=4,\ldots,n$. Then there is no finite connected graph on $n$ vertices for which $\depth(S/J_G)=t$ and $\dim(S/J_G)=2n-3$. 
\end{Lemma}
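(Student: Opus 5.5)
The plan is to mirror the proof of Lemma~\ref{d=2n-2}. We show that $\dim(S/J_G)=2n-3$ with $n\geq 5$ forces $G$ to be a connected block graph. Theorem~\ref{block} then gives $\depth(S/J_G)=n+1$, contradicting $t\leq n$.

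\emph{Step 1: pin down $T$.} Suppose $G$ is connected on $[n]$ with $\dim(S/J_G)=2n-3$. By \eqref{dimension formula} there is $T\in\mathcal{C}(G)$ with $n-|T|+c_G(T)=2n-3$, that is, $c_G(T)=n+|T|-3$.
\begin{itemize}
\item Since $n\geq 5$, we have $2n-3>n+1$, which is the value attained at $T=\emptyset$ for connected $G$. Hence $|T|\geq 1$.
\item The graph $G|_{[n]-T}$ has $n-|T|$ vertices, so $c_G(T)\leq n-|T|$. This gives $n+|T|-3\leq n-|T|$, i.e.\ $2|T|\leq 3$, so $|T|\leq 1$.
\end{itemize}
Therefore $T=\{i\}$ for a single vertex $i$, and $c_G(T)=n-2$.

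\emph{Step 2: identify $G$.} The graph $G-i$ has $n-1$ vertices split into $n-2$ connected components. Hence exactly one component has two vertices, namely a single edge $\{j,k\}$, and the remaining $n-3$ components are isolated vertices. Since $G$ is connected, every isolated vertex of $G-i$ is adjacent to $i$, and at least one of $j,k$ is adjacent to $i$. This leaves two cases.
\begin{itemize}
\item Both $j$ and $k$ are adjacent to $i$. Then $G$ is a triangle on $\{i,j,k\}$ with $n-3$ leaves attached at $i$.
\item Only one of them, say $j$, is adjacent to $i$. Then $G$ is a tree.
\end{itemize}
In either case $G$ is a connected block graph: its only cycle, if any, is a triangle, and distinct maximal cliques meet in at most the vertex $i$ or $j$.

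\emph{Step 3: conclude.} By Theorem~\ref{block}, $\depth(S/J_G)=n+1$, contradicting $\depth(S/J_G)=t\leq n$.

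The only delicate point is Step 1. We must use $n\geq 5$ to exclude $T=\emptyset$, because for $n=4$ we have $2n-3=n+1$ and this exclusion fails. We must also use the counting bound $c_G(T)\leq n-|T|$ to force $|T|=1$. After that, the structural description in Step 2 is routine.
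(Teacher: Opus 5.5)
Your proof is correct and follows the paper's argument. You force $T=\{i\}$ with $c_G(T)=n-2$, identify $G$ as either a triangle with $n-3$ leaves at $i$ or a star with one pendant leaf, and invoke Theorem~\ref{block}. The only cosmetic difference is that you use $c_G(T)\le n-|T|$ to get $|T|\le 1$ at once, whereas the paper uses $c_G(T)\le n-1$ to get $|T|\le 2$ and then rules out $|T|=2$ separately.
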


\begin{proof}
	Suppose on the contrary that there exists a finite connected graph $G$ on $[n]$ with $\depth(S/J_G)=t$ and $\dim(S/J_G)=2n-3$. Since $\dim(S/J_G)=2n-3$, it follows from \eqref{dimension formula} that there is $T \subset [n]$ with $n-|T|+c_G(T)=2n-3$, and hence 
	\begin{equation}\label{useful equality}
	 n+|T|=c_G(T)+3.
	\end{equation}
	Since $n\geq 5$, one has $2n-3>n+1$ which implies that $|T|\geq 1$. Since $c_G(T)\leq n-1$, it follows that $c_G(T)+3\leq n+2$, and hence $|T|\leq 2$ by \eqref{useful equality}. 
	
	If $|T|=2$, then \eqref{useful equality} implies that $c_G(T)=n-1$, a contradiction. Let $|T|=1$ and $T=\{i\}$. Then $c_G(T)=n-2$ by \eqref{useful equality}. Thus, $G-i$ is the disjoint union of a complete graph $K_2$ and $n-3$ isolated vertices. Therefore, $G$ is either the join of $G-i$ and $K_1$ or obtained by attaching one leaf to one of the leaves of a star graph $K_{1,n-2}$. In both cases, $G$ is a block graph and $\depth(S/J_G)=n+1$ (Theorem~\ref{block}), a contradiction.
    \hspace{10.8cm}
\end{proof}

It follows from Theorem~\ref{maximum depth}, Lemma~\ref{d=2n-2} and Lemma~\ref{d=2n-3} that

\begin{Corollary}\label{higher d}
  Let $t,n,d$ be integers with $4\leq t\leq n+1$.  Suppose that one of the following conditions is satisfied:
  \begin{enumerate}
  	\item $n\geq 3$ and $d=2n-2$;
  	\item $n\geq 5$ and $d=2n-3$.
  \end{enumerate} 
Then there exists a finite connected  graph with $n$ vertices with $\depth(S/J_G)=t$ and $\dim(S/J_G)=d$ if and only if $t=n+1$. 
\end{Corollary}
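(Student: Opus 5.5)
The corollary follows by assembling Theorem~\ref{maximum depth}, Lemma~\ref{d=2n-2} and Lemma~\ref{d=2n-3}; the only care needed is with the small values of $n$ that those lemmas exclude. I will prove the two directions separately, in both cases (1) and (2).

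For the ``if'' direction, let $t=n+1$. In case (1) we have $n\geq 3$ and $d=2n-2$. In case (2) we have $n\geq 5$ and $d=2n-3$, so $n+1\leq d$ holds because $n\geq 4$. In either case $n+1\leq d\leq 2n-2$, so Theorem~\ref{maximum depth} gives a connected graph $G$ on $n$ vertices with $\depth(S/J_G)=n+1$ and $\dim(S/J_G)=d$. For $d=2n-2$ this graph can be taken to be the star $K_{1,n-1}$.

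For the ``only if'' direction, suppose $G$ is a connected graph on $n$ vertices with $\depth(S/J_G)=t$ and $\dim(S/J_G)=d$, where $4\leq t\leq n+1$. We must show $t=n+1$, that is, rule out $4\leq t\leq n$.
\begin{enumerate}
\item In case (2), $n\geq 5$ and $d=2n-3$, so Lemma~\ref{d=2n-3} applies directly and excludes every $t$ with $4\leq t\leq n$.
\item In case (1) with $n\geq 4$, Lemma~\ref{d=2n-2} likewise excludes every $t$ with $4\leq t\leq n$.
\item In case (1) with $n=3$, the range $4\leq t\leq n$ is empty. Hence the hypothesis $4\leq t\leq n+1=4$ already forces $t=4=n+1$, and nothing remains to prove.
\end{enumerate}

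The only point needing attention is this boundary bookkeeping: checking that $n=3$ in case (1) is vacuous, and that $d=2n-3$ lies in the range of Theorem~\ref{maximum depth} when $n\geq 5$. There is no real obstacle beyond that.
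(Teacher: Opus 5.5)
Your proposal is correct and is essentially the paper's argument. The paper obtains the corollary directly by combining Theorem~\ref{maximum depth} (for the ``if'' direction) with Lemmas~\ref{d=2n-2} and~\ref{d=2n-3} (for the ``only if'' direction). You additionally make explicit the bookkeeping the paper leaves implicit: the range $4\leq t\leq n$ is empty when $n=3$, and $n+1\leq 2n-3$ holds for $n\geq 5$.
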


\section{Two smallest values of $t$ in Question~\ref{Question}}\label{small depth section}

We now focus on two smallest values of $t$, namely $4$ and $5$. In \cite{RSK1}, binomial edge ideals whose quotient have depth equal to~$4$ are characterized (\cite[Theorem~5.3]{RSK1}).  

Let $G^c$ be the complementary graph \cite[p.~153]{HHgtm260} of $G$.

\begin{Theorem}[\cite{RSK1}]
    \label{depth=4}
	Let $G$ be a finite graph on $n\geq 4$ vertices. Then the following statements are equivalent:
	\begin{itemize}
		\item[(i)] $\depth(S/J_G) = 4$;
		\item[(ii)] $G =G'*K^c_2$ for some finite graph $G'$.
	\end{itemize}
\end{Theorem}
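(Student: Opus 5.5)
This is a cited result from \cite{RSK1}, and the plan is to prove the two implications separately. Throughout we use the inequality $\depth(S/J_G)\geq 4$ for $n\ge 3$, valid for connected $G$ on $n\geq 3$ vertices.

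\textbf{(ii)$\Rightarrow$(i).} Suppose $G=G'*K_2^c$, and let $a,b$ be the two nonadjacent vertices of $K_2^c$. Then $G$ is connected and non-complete. Put $T=V(G')$. Removing $T$ leaves exactly the two isolated vertices $a,b$, so $\kappa(G)\leq n-2$.

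The upper bound in \eqref{depth} is only $n-\kappa(G)+2$. Since $\kappa(G)$ can be small, this does not give $4$, so a sharper argument is needed. I would use the standard short exact sequence
\[
0\to S/J_G\to S/P_\emptyset\oplus S/(J_G+Q)\to S/(P_\emptyset+Q)\to 0,
\]
where $Q$ is chosen so that $J_G=P_\emptyset(G)\cap(J_G+Q)$. For this I would take $Q=(x_a,y_a)$ or $Q=(x_v,y_v:v\in V(G'))$.

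Take $Q=(x_v,y_v : v\in V(G'))$. Then $J_G+Q=Q+J_{K_2^c}=Q$, since $a,b$ have no edge. So $S/(J_G+Q)$ is a polynomial ring in the four variables $x_a,y_a,x_b,y_b$, and has depth $4$.

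Next, $S/P_\emptyset$ is the determinantal ring of a $2\times n$ generic matrix, with depth $n+1$. Also $S/(P_\emptyset+Q)$ is the determinantal ring of a $2\times 2$ matrix, with depth $3$.

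The main point to verify is the intersection $J_G=P_\emptyset\cap Q'$ with $Q'=Q$. This fails in general, since there are other minimal primes $P_T$ with $T\subsetneq V(G')$. So I would instead argue via the lower and upper bound: any $T\in\mathcal C(G)$ with $T\ne\emptyset$ must contain $V(G')$. Hence $\mathcal C(G)=\{\emptyset,V(G')\}$, and indeed $J_G=P_\emptyset\cap P_{V(G')}$ as $J_G$ is radical. The depth lemma then gives $\depth(S/J_G)\geq\min\{4,\,3+1\}=4$. Equality follows from the long exact sequence of local cohomology: $H^3$ of the cokernel $S/(P_\emptyset+Q)$ maps injectively into $H^4(S/J_G)$, because $H^3$ of the middle term vanishes (its depths are $\geq4$). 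Therefore $H^4_\mathfrak m(S/J_G)\neq0$ and the depth is $4$.

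\textbf{(i)$\Rightarrow$(ii).} This is the hard direction. Suppose $G$ is not of the form $G'*K_2^c$. Then every pair of nonadjacent vertices $a,b$ has a vertex outside $N(a)\cap N(b)\cup\{a,b\}$. I would induct on $n$ using the exact sequence of Ohtani type at a non-free vertex $v$:
\[
0\to S/J_G\to S/J_{G_v}\oplus S/((x_v,y_v)+J_{G-v})\to S/((x_v,y_v)+J_{G_v-v})\to0,
\]
where $G_v$ is $G$ with $N(v)$ completed. The aim is to show that $\depth\geq5$ propagates, which requires checking that the non-join property survives in $G-v$ or $G_v$, or that these are connected appropriately. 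The case analysis keeping the hypothesis through the induction (connectivity of $G-v$, and the base cases $n=4$) is the main obstacle. I would first try choosing $v$ of maximal degree among non-free vertices, and handle the disconnected $G-v$ case separately via tensor decomposition of the components.
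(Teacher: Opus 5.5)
\textbf{Direction (ii)$\Rightarrow$(i).} Your argument rests on the claim that every nonempty $T\in\mathcal{C}(G)$ contains $V(G')$, so that $J_G=P_\emptyset\cap P_{V(G')}$. This is false unless $G'$ is complete. Let $a,b$ be the vertices of $K_2^c$. If $G'$ is disconnected, then $T=\{a,b\}$ has the cut point property, because $a$ is a cut point of $G|_{V(G')\cup\{a\}}=K_1*G'$. This is exactly the element $V(K_2^c)\in\mathcal{C}(G)$ used in the proof of Theorem~\ref{t=4}. Even for connected non-complete $G'$ there are extra sets. For example, $\{a,b,w\}\in\mathcal{C}(G)$ when $G'$ is the path with edges $\{u,w\}$ and $\{w,z\}$. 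So your sequence $0\to S/J_G\to S/P_\emptyset\oplus S/Q\to S/(P_\emptyset+Q)\to 0$ exists only for $G=K_{n-2}*K_2^c$.

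The repair takes one line and makes the detour unnecessary. Since $V(G')\in\mathcal{C}(G)$, the ideal $P_{V(G')}=(x_v,y_v: v\in V(G'))$ is a minimal, hence associated, prime of $S/J_G$ with $\dim S/P_{V(G')}=4$. Because $\depth M\le \dim R/\mathfrak p$ for every $\mathfrak p\in\Ass(M)$, this gives $\depth S/J_G\le 4$, and the lower bound $\depth S/J_G\ge 4$ finishes the argument.

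\textbf{Direction (i)$\Rightarrow$(ii).} This direction is not proved. You outline an induction and leave the key step open, and the mechanism itself cannot work. Take $G=K_{3,3}$.
\begin{itemize}
\item $G$ is not of the form $G'*K_2^c$: two nonadjacent vertices lie on the same side, and the third vertex of that side is adjacent to neither.
\item Every vertex of $G$ is non-free.
\item For every $v$, the graph $G-v\cong K_{2,3}=K_3^c*K_2^c$ is connected and of join form, so $\depth S_v/J_{G-v}=4$.
\end{itemize}
Hence at every vertex the middle term of Ohtani's sequence has depth at most $4$. The depth lemma then gives nothing better than $\depth S/J_G\ge 4$. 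The statement, however, requires $\depth S/J_{K_{3,3}}\ge 5$; in fact it equals $5$ by the upper bound in \eqref{depth}.

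Here $G_v-v\cong K_3*K_2^c$ also has depth $4$. So what you would actually need is that the map $H^4_{\mathfrak m}(S/J_{G_v})\oplus H^4_{\mathfrak m}(S_v/J_{G-v})\to H^4_{\mathfrak m}(S_v/J_{G_v-v})$ is injective. That is information about maps of local cohomology modules, which no depth count supplies, and it is the missing idea. The cited source \cite{RSK1} obtains it from a Hochster-type description of $H^i_{\mathfrak m}(S/J_G)$ via the poset of sums of the minimal primes $P_T(G)$, together with the contractibility of suitable subposets.

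Finally, the statement does not assume $G$ is connected, so you also need to dispose of disconnected $G$. Each component has at least two vertices, so the depth is at least $6$, while $G'*K_2^c$ is connected.
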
 

\begin{Example}\label{n=4}
{\em 
	Let $G=K_2*K^c_2$.  One has $\depth(S/J_G) = 4$ (Theorem~\ref{depth=4}).  Let $V(K_2) = \{1,2\}$.  One has $\mathcal{C}(G)=\{\emptyset, \{1,2\}\}$ and, by \eqref{dimension formula}, $\dim(S/J_G)=5$.  
    }
\end{Example}

Now, by using Corollary~\ref{higher d} and Example~\ref{n=4}, one can assume that $n,t,d$ are integers with $n\geq 5$, $4\leq t\leq n$ and $n+1\leq d\leq 2n-4$.  

\begin{Theorem}\label{t=4}
	Let $n,d$ be integers with $n\geq 5$ and $n+1\leq d\leq 2n-4$. Then there exists a finite connected graph $G$ on $n$ vertices with $\depth(S/J_G)=4$ and $\dim(S/J_G)=d$.   
\end{Theorem}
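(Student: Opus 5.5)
The plan is to use Theorem~\ref{depth=4}: every graph of the form $G=G'*K_2^c$ on $n\geq 5$ vertices satisfies $\depth(S/J_G)=4$. So it suffices to choose $G'$ on $n-2$ vertices so that $\dim(S/J_G)=d$, computed from \eqref{dimension formula}. I will take $G'$ to be a disjoint union of $k$ complete graphs $H_1,\ldots,H_k$ on $n-2$ vertices in total. I expect this to produce $d=n-2+k$ for $3\leq k\leq n-2$, which covers $n+1\leq d\leq 2n-4$. The graph $G$ is connected because every vertex of $G'$ is adjacent to both vertices $a,b$ of $K_2^c$.

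The main step is to determine $\mathcal{C}(G)$ for $G=G'*K_2^c$, where $V(K_2^c)=\{a,b\}$ with $a,b$ non-adjacent. Let $T\in\mathcal{C}(G)$ be nonempty.

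First suppose $T$ misses one of $a,b$, say $b\notin T$. Then all remaining vertices of $G'$ are joined through $b$. Hence $G|_{[n]-T}$ is disconnected only if $T\supseteq V(G')$, so that $a$ becomes isolated. In that case $T=V(G')$, because $a\in T$ would not be a cut point. This $T$ gives the value $n-(n-2)+2=4$.

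Otherwise $\{a,b\}\subseteq T$, and I write $T'=T\cap V(G')$. For $a$ to be a cut point of $G|_{([n]-T)\cup\{a\}}$, the graph $G'|_{V(G')-T'}$ must have at least two components, since $a$ is adjacent to all of them. Each $i\in T'$ must be a cut point of the corresponding induced subgraph of $G'$. When $G'$ is a disjoint union of cliques, no vertex is ever a cut point of an induced subgraph of $G'$. Hence $T'=\emptyset$, and we need $k\geq 2$. This $T=\{a,b\}$ gives the value $n-2+k$.

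Therefore, for $k\geq 3$,
\[
\dim(S/J_G)=\max\{n+1,\,4,\,n-2+k\}=n-2+k .
\]
As $k$ runs from $3$ to $n-2$, taking each $H_i$ nonempty (possible since $n-2\geq k$), this covers $n+1\leq d\leq 2n-4$. For $d=n+1$ one may also take $k=1$, i.e.\ $G'=K_{n-2}$. Then $\mathcal{C}(G)=\{\emptyset,V(G')\}$ and $\dim(S/J_G)=\max\{n+1,4\}=n+1$.

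The only real obstacle is the careful verification of $\mathcal{C}(G)$ above, in particular that no mixed $T$ (containing exactly one of $a,b$, or containing vertices of the cliques) has the cut point property. The argument rests on two facts: $a$ and $b$ each dominate $V(G')$, and induced subgraphs of a disjoint union of cliques have no cut points.
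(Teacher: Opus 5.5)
Your proposal is correct and is essentially the paper's proof. The paper takes $G'$ to be the disjoint union of $K^c_{\ell+1}$ and $K_{n-\ell-3}$, which is your disjoint union of $k=\ell+2$ cliques with $\ell+1$ of them singletons, and reads off $\mathcal{C}(G)=\{\emptyset, V(K^c_2), V(G')\}$ to get $\dim(S/J_G)=n+\ell$. Your case analysis of $\mathcal{C}(G)$ supplies the verification that the paper states without detail, and it works for any partition of $V(G')$ into $k\geq 3$ cliques.
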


\begin{proof}
  Let $\ell=1,\ldots,n-4$ and $d=n+\ell$. Let $H$ be the disjoint union of $K^c_{\ell+1}$ and $K_{n-\ell-3}$.  Set $G:=K^c_2*H$.  It then follows from Theorem~\ref{depth=4} that $\depth(S/J_G)=4$. One has $\mathcal{C}(G)=\{\emptyset, V(K^c_2), V(H)\}$ and hence 
  \[
  \dim(S/J_G)=\max \{n+1, n+\ell, 4\}=n+\ell=d,
  \]
as desired. 
\hspace{11.8cm}
\end{proof}

In \cite[Definition 3]{RSK2}, for the purpose of classifying all binomial edge ideals whose quotient has depth equal to~$5$, a class of graphs are introduced. In the sequel this class of graphs will be helpful for us.  
We say that a vertex $i$ of $G$ is {\em adjacent} to a vertex $j$ of $G$ if $\{i,j\} \in E(G)$.  A {\em neighbor} of a vertex $i$ of $G$ is a vertex $j$ of $G$ with $\{i,j\} \in E(G)$. Let $N_G(i)$ denote the set of neighbors of a vertex $i$ in $G$. The \emph{degree} of a vertex~$i$ in $G$ is defined to be $|N_G(i)|$.  
Let $W \subset [n]$ with $|W|=n-2$. 
Let $\mathcal{G}_W$ denote the set of graphs $G$ on $[n]$ for which there exist two nonadjacent vertices $i$ and $j$ of $G$ with $[n]-T=\{i,j\}$ and three disjoint subsets of $W$, say $V_0$, $V_1, V_2$, with $V_1 \neq \emptyset,V_2\neq \emptyset$ and $W=V_0\cup V_1\cup V_1$ for which the following conditions hold:
\begin{enumerate}
  \item $N_G(i)=V_0\cup V_1$, $N_G(j) = V_0 \cup V_2$;
  \item Any vertex in $V_1$ is adjacent to any vertex in $V_2$. 
\end{enumerate} 

\begin{Theorem}[\cite{RSK2}]
\label{depth=5}
  Let $n\geq 5$ be an integer and $W\subset [n]$ with $|W|=n-2$. If $G\in \mathcal{G}_W$ and $G\neq G'*K^c_2$ for any graph $G'$, then $\depth(S/J_G)=5$.	
\end{Theorem}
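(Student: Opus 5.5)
The claim has two parts: $\depth(S/J_G)\geq 5$ and $\depth(S/J_G)\leq 5$. I would prove them separately.

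\textbf{Lower bound.} This should be immediate from earlier results. We have $n\geq 5$ and $G$ is connected, since $i$ and $j$ are joined through $V_1$ and $V_2$, and $V_1\neq\emptyset$ is completely joined to $V_2\neq\emptyset$. Hence $\depth(S/J_G)\geq 4$ by \cite[Theorem~5.2]{RSK1}. By Theorem~\ref{depth=4}, equality holds exactly when $G=G'*K^c_2$ for some graph $G'$. That case is excluded by hypothesis, so $\depth(S/J_G)\geq 5$.

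\textbf{Upper bound: strategy.} I would prove $\depth(S/J_G)\leq 5$ by induction on $|V_0|$, using Ohtani's decomposition at a non-free vertex. For a vertex $v$ that is not free, write $G_v$ for the graph obtained from $G$ by completing $N_G(v)$ into a clique. Then
\[
J_G=J_{G_v}\cap\bigl((x_v,y_v)+J_{G-v}\bigr),
\]
which gives the exact sequence
\[
0\to S/J_G\to S/J_{G_v}\oplus S/\bigl((x_v,y_v)+J_{G-v}\bigr)\to S/\bigl((x_v,y_v)+J_{G_v-v}\bigr)\to 0 .
\]
The depth lemma gives $\depth(S/J_G)\leq 5$ provided two things hold: the third module has depth at most $4$, and the middle module has depth at least $5$. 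Alternatively, the bound follows if the two outer terms are arranged so that $H^5_{\frk m}(S/J_G)\neq 0$ can be read off from the long exact sequence of local cohomology.

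\textbf{Upper bound: the induction.} The natural choice is $v\in V_0$, since such a $v$ is adjacent to both $i$ and $j$. Then $G_v-v$ has the nonadjacent pair $i,j$ with common neighbourhood the clique $N_G(v)-\{i,j\}\supseteq V_0-\{v\}$, so $G_v-v$ tends to become a join with $K_2^c$ on $\{i,j\}$ after adding these edges. In that case $\depth\bigl(S/((x_v,y_v)+J_{G_v-v})\bigr)=4$ by Theorem~\ref{depth=4}, applied on the $n-1$ remaining vertices. Meanwhile $G-v$ again lies in $\mathcal G_{W-\{v\}}$, with $V_0$ replaced by $V_0-\{v\}$, so induction controls it. The base case $V_0=\emptyset$ needs a separate argument. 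There $i$ and $j$ have disjoint neighbourhoods $V_1$ and $V_2$, every vertex of $V_1$ is joined to every vertex of $V_2$, and I would apply the same sequence at $v=i$. The graph $G_i$ makes $V_1\cup\{i\}$ a clique, and $G-i$ has the vertex $j$ whose neighbourhood $V_2$ is completely joined to $V_1$. I expect both pieces to be handled by Theorem~\ref{block}, Theorem~\ref{depth=4}, and direct computation for small $|V_1|$ and $|V_2|$.

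\textbf{Main obstacle.} The hard step is keeping control of the middle term. The depth lemma only gives $\depth(S/J_G)\geq\min\{\text{middle},\ \text{third}+1\}$, together with the upper estimate $\depth(S/J_G)\leq \text{third}+1$ when the middle depth exceeds the third. So I must verify that the middle summands have depth strictly greater than $4$, that is, that $G_v$ and $G-v$ are not themselves joins with $K^c_2$. If this fails for some configurations, I would instead try the vertex $T$-set $W$ or $V_0\cup V_1$. I would then compare $S/J_G$ with $S/P_T(G)$ via the natural surjection and the local cohomology long exact sequence, aiming directly at showing $H^5_{\frk m}(S/J_G)\neq 0$.
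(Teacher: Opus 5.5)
Your lower bound is correct: $G$ is connected, and Theorem~\ref{depth=4} excludes depth $4$. The paper gives no argument of its own here, only the citation \cite[Theorem~5]{RSK2}. The upper bound is the real content, and your plan does not establish it. The Ohtani-plus-depth-lemma strategy cannot close in general.

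First, the inductive step rests on a false claim. A vertex $v\in V_0$ is adjacent to both $i$ and $j$, so in $G_v$ the vertices $i,j$ become adjacent and can never be the $K_2^c$ factor of $G_v-v$. In fact $G_v-v$ need not be a $K_2^c$-join at all. For $V_0=\{v\}$, $V_1=\{a_1,a_2\}$, $V_2=\{b_1,b_2\}$ and no further edges, $G_v-v\cong K_{3,3}$. This graph has depth $5$: it is at most $6-3+2$ by \eqref{depth}, and it is not a $K_2^c$-join.

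Worse, your base case already fails for every choice of vertex. Take $G=K_{3,3}$ minus the edge $\{i,j\}$, that is, $V_0=\emptyset$, $V_1=\{a_1,a_2\}$, $V_2=\{b_1,b_2\}$ and no other edges. This $G$ lies in $\mathcal G_W$ and is not a $K_2^c$-join.
\begin{itemize}
\item At $v=i$, $G-i\cong K_{2,3}$ is a $K_2^c$-join, so the middle term has depth $4$. This equals the depth of the third term, since $G_i-i$ is also a $K_2^c$-join on $\{b_1,b_2\}$.
\item At $v=a_1$, the third term comes from $G_{a_1}-a_1\cong K_2*(K_2\sqcup K_1)$. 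It has depth $5$: it is not a $K_2^c$-join, and $T=\{b_1,b_2\}$ gives a minimal prime of dimension $5$. Both middle summands have depth $\ge 5$.
\end{itemize}
By symmetry every vertex behaves like $i$ or like $a_1$. So the depth lemma never yields $\depth(S/J_G)\le 5$ for this graph.

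Your fallback, comparing with $S/P_T(G)$, gives nothing as stated either. Here $\mathcal C(G)=\{\emptyset,V_1,V_2,V_1\cup\{j\},V_2\cup\{i\}\}$, every minimal prime has dimension $\ge 6$, and $W\notin\mathcal C(G)$.

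The missing idea is that depth $5$ is witnessed by a \emph{non-minimal} prime.
\begin{itemize}
\item Put $Q=(x_k,y_k: k\in W)$, so $\dim S/Q=4$. Both $V_0\cup V_1$ and $V_0\cup V_2$ lie in $\mathcal C(G)$: removing $V_0\cup V_1$ leaves exactly the components $\{i\}$ and $\{j\}\cup V_2$, each removed vertex reconnects them, and the other set is symmetric. Moreover $Q=P_{V_0\cup V_1}(G)+P_{V_0\cup V_2}(G)$.
\item A minimal prime $P_T(G)$ lies in $Q$ if and only if $T\subseteq W$ and $T$ separates $i$ from $j$. This forces $T\supseteq V_0\cup V_1$ or $T\supseteq V_0\cup V_2$, but not both, because $W\notin\mathcal C(G)$ (a vertex of $V_1$ is not adjacent to $j$).
\item Any sum mixing the two kinds equals $Q$. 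Comparing the linear forms they contain, no sum $\neq Q$ of primes of the first kind is comparable with a sum $\neq Q$ of the second kind. Hence the poset of sums of minimal primes properly contained in $Q$ is disconnected.
\item Use \'Alvarez Montaner's Hochster-type formula for $H^\bullet_{\frk m}(S/J_G)$. There a sum $q$ of dimension $d_q$ contributes $H^{d_q}_{\frk m}(S/q)$ to $H^r_{\frk m}(S/J_G)$ with multiplicity $\dim_K\tilde H^{r-d_q-1}$ of the poset of sums properly contained in $q$. For $q=Q$ and $r=5$ this multiplicity is $\dim\tilde H^0\neq 0$.
\end{itemize}
Therefore $H^5_{\frk m}(S/J_G)\neq 0$ and $\depth(S/J_G)\le 5$. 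Together with your lower bound, this proves the statement.
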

 
See \cite[Theorem~5]{RSK2} for a proof of Theorem \ref{depth=5}. 


\begin{Theorem}\label{t=5}
	Let $n,d$ be integers with $n\geq 5$ and $n+1\leq d\leq 2n-4$. Then there exists a finit connected graph $G$ on $n$ vertices with $\depth(S/J_G)=5$ and $\dim(S/J_G)=d$.   
\end{Theorem}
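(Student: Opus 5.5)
Fix $\ell\in\{1,\ldots,n-4\}$ and put $d=n+\ell$. The plan is to build a graph in some class $\mathcal{G}_W$ that is not of the form $G'*K^c_2$, so that Theorem~\ref{depth=5} gives $\depth(S/J_G)=5$, and then to read off $\dim(S/J_G)$ from \eqref{dimension formula}. For the dimension computation we want a graph whose set $\mathcal{C}(G)$ is small and explicit, as in the proof of Theorem~\ref{t=4}.

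\textbf{The construction.} Take two nonadjacent vertices $i,j$ and the partition $W=V_0\cup V_1\cup V_2$ with $V_1=\{a\}$, $V_2=\{b\}$, and $V_0$ of size $n-4$. Require $a$ adjacent to $b$, so condition (2) holds. Put $N(i)=V_0\cup\{a\}$ and $N(j)=V_0\cup\{b\}$. Inside $V_0$, take the disjoint union of $K^c_{\ell}$ and a complete graph on the remaining $n-4-\ell$ vertices; when $\ell=n-4$ the clique part is empty. Make $a$ and $b$ adjacent to every vertex of $V_0$.

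\textbf{Checking the hypotheses.} Then $G\in\mathcal{G}_W$. We must also check that $G$ is not a join $G'*K^c_2$, i.e.\ that no two nonadjacent vertices $u,v$ are adjacent to all other vertices.
\begin{itemize}
\item The pair $\{i,j\}$ fails, since $i$ is not adjacent to $b$.
\item Any pair inside $V_0\cup\{i,j\}$ other than $\{i,j\}$ fails. Each vertex $u\in V_0$ is nonadjacent to the vertices of the $K^c_\ell$ part other than itself, or to the clique part. More simply, if $u\in V_0$ and $v$ is any vertex, then $u,v$ would both have to be adjacent to all of $V_0$ minus themselves. This happens only if the remaining vertices of $V_0$ form a clique containing the neighbours.
\end{itemize}
This last point needs care when $\ell=1$ or $\ell=2$. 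If it fails, we adjust the construction, for instance by deleting the edge $\{a,x\}$ for one $x\in V_0$. This step is the main obstacle: the graph must avoid the join shape while keeping $\mathcal{C}(G)$ controllable.

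\textbf{The dimension.} We use \eqref{dimension formula}. Take $T=\{a,b\}\cup C\cup\{i,j\}$-type separators; concretely, $T=\{a,b\}\cup K$ with $K$ the clique part of $V_0$. Removing $T$ leaves each vertex of $K^c_\ell$ joined to both $i$ and $j$, which is a single component. So that choice is poor.

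Instead take $T=V_0'\cup\{a,b\}$, where $V_0'$ is chosen so that the $\ell$ independent vertices become isolated. The cleaner option is to make the $K^c_\ell$ vertices adjacent only to $a$ and $b$, not to $i$ and $j$. In that case $V_0$ consists only of the clique part, of size $n-4-\ell$, and the independent set $I$ of size $\ell$ is attached to $a,b$ and to $V_0$. Then $T=\{a,b\}\cup V_0$ has the cut point property, and $G-T$ consists of the edge-free pieces $\{i\}$, $\{j\}$ and the $\ell$ vertices of $I$. This gives the value
\[
n-(n-2-\ell)+(\ell+2)=2\ell+4,
\]
which is not what we want. So the final choice of the gadget is tuned so that the best separator $T$ yields exactly $n+\ell$: typically a set $T$ of size $s$ producing $s+\ell$ components, with all other $T\in\mathcal{C}(G)$ giving at most $n+\ell$.

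\textbf{Verification plan.} We list $\mathcal{C}(G)$ by the cut point property, as in Theorem~\ref{t=4}, compute $n-|T|+c_G(T)$ for each member, and take the maximum. The main work is this enumeration together with the non-join check.
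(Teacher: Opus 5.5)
There is a genuine gap. The proposal never settles on a graph that works, so neither $\depth(S/J_G)=5$ nor $\dim(S/J_G)=n+\ell$ is established for any concrete $G$.

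\textbf{Your first graph.} It is the join of the path $P_4$ on $i,a,b,j$ with $K^c_\ell\sqcup K_{n-4-\ell}$. A set $T$ that disconnects a join must contain one of its two sides. Sets containing $\{i,a,b,j\}$ give $n-|T|+c_G(T)\le n+\ell-3$. Sets containing $V_0$ give at most $5$. Hence
\[
\dim(S/J_G)\le\max\{n+1,\,n+\ell-3\}<n+\ell \quad\text{for every } \ell\ge 2 .
\]
Moreover, for $n=6$ (both $\ell=1$ and $\ell=2$) the graph is exactly $P_4*K^c_2$, so its depth is $4$ by Theorem~\ref{depth=4}.

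\textbf{Your second variant.} There the independent vertices are adjacent to neither $i$ nor $j$. This leaves $\mathcal{G}_W$ altogether, because the definition forces $N_G(i)\cup N_G(j)=V_0\cup V_1\cup V_2=W$. So Theorem~\ref{depth=5} no longer applies. As you observed, it also yields $2\ell+4$ rather than $n+\ell$. The closing paragraph only describes what a good gadget should achieve; it does not supply one.

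\textbf{The missing idea.} Place the gadget in the private neighbourhood $V_2$ of $j$ rather than in $V_0$. This is the paper's construction:
\begin{itemize}
\item take $V_0=\emptyset$ and $V_1=\{a\}$, so $i$ is a leaf hanging off $a$;
\item let $G|_{V_2}=K^c_\ell\sqcup K_{n-\ell-3}$;
\item make $a$ adjacent to all of $V_2$ and set $N_G(j)=V_2$.
\end{itemize}
Then $a$ is the only vertex of degree $n-2$, so $G\neq G'*K^c_2$ for any $G'$, and Theorem~\ref{depth=5} gives depth $5$. Also $\mathcal{C}(G)=\{\emptyset,\{a\},\{a,j\},V_2\}$, with values $n+1$, $n+1$, $n+\ell$, $5$ respectively, so $\dim(S/J_G)=n+\ell$. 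The key point is that the two-element set $\{a,j\}$ splits off $i$, the $\ell$ independent vertices and the clique, giving $\ell+2$ components. This is exactly the ``$s$ removed vertices, $s+\ell$ components'' pattern, with $s=2$, that you were aiming for.
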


\begin{proof}
	Let $\ell=1,\ldots,n-4$ and $d=n+\ell$. Let $W=[n]-\{1,2\}$.  Let $G$ be a graph belonging to $\mathcal{G}_W$ with $V_0=\emptyset$ and $|V_1|=1$ for which $G|_{V_2}$ is the disjoint union of $K^c_{\ell}$ and $K_{n-\ell-3}$, where $N_G(1)=V_1$ and $N_G(2)=V_2$.  Observe that $G \neq G'*K^c_2$ for any graph $G'$, since the only vertex of degree~$n-2$ in $G$ is the vertex in $V_1$. One has $\depth(S/J_G)=5$ (Theorem~\ref{depth=5}).
	Furthermore, one has $\mathcal{C}(G)=\{\emptyset, V_1, V_1\cup \{2\}, V_2\}$
	and hence 
	\[
	\dim(S/J_{G})=\max \{n+1, n+\ell, 5\}=n+\ell = d,
	\]
as desired. 
\hspace{11.8cm}
\end{proof}

\section{The remaining values of $t$ in Question~\ref{Question} from $6$ to $n$}\label{other values section}

By virtue of Corollary~\ref{higher d}, Example~\ref{n=4} and Theorems~\ref{t=4} and~\ref{t=5}, to complete the answer to Question~\ref{Question}, it remains to discuss integers $n,t,d$ with $6\leq t\leq n$ and $n+1\leq d\leq 2n-4$.  We construct a family of graphs which plays an important role for our goal. Let $n,t,\ell,p$ be integers with $6\leq t\leq n$, $1\leq \ell \leq t-4$ and $2\leq p\leq n-t+2$. 

We consider the following procedure to construct the graph $G^t_{\ell,p}$ with $n$ vertices: 
\begin{itemize}
	\item Let $\tilde{H}_{t,\ell}$ be the disjoint union of $K^c_{\ell-1}$ and $K_{t-\ell-3}$. 
	\item Set $H(t,\ell):=K_1*\tilde{H}_{t,\ell}$, where $V(K_1)=\{q\}$.
	\item Let $\tilde{L}_{t,p}$ be the disjoint union of $K^c_{p-1}$ and $K_{n-t-p+3}$. 
	\item Set $L(t,p):=K^c_2*\tilde{L}_{t,p}$, where $V(K^c_2)=\{i,j\}$. 
	\item Finally, we define $G^t_{\ell,p}$ to be the graph obtained from $H(t,\ell)$ and $L(t,p)$ (on disjoint sets of vertices) by identifying the vertices $i$ and $q$. 
\end{itemize}
  
We compute the Krull dimension and the depth of the binomial edge ideal of $G^t_{\ell,p}$. 

\begin{Theorem}\label{final dim}
	Let $n,t,\ell,p$ be integers with $$6\leq t\leq n, \, 1\leq \ell \leq t-4, \, 2\leq p\leq n-t+2.$$ Then one has 
	\[
	\dim (S/J_{G^t_{\ell,p}})=n+\ell+p-2.
	\] 
	In particular, given integers $n,d$ with $n\geq 6$ and $n+1\leq d\leq 2n-4$, there exists a finite connected graph $G$ on $n$ vertices with $\dim (S/J_G)=d$.  
\end{Theorem}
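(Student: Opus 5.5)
The plan is to determine $\mathcal{C}(G)$ completely for $G=G^t_{\ell,p}$ and then apply \eqref{dimension formula}. First I would fix notation and count vertices. The $H$-side consists of $q=i$, a set $A$ of $\ell-1$ leaves attached to $i$, and a clique $B\cong K_{t-\ell-3}$ all of whose vertices are adjacent to $i$; here $B\neq\emptyset$ because $\ell\le t-4$. The $L$-side consists of $i$, $j$, a set $C$ of $p-1$ vertices with $N_G(c)=\{i,j\}$, and a clique $D\cong K_{n-t-p+3}$ all of whose vertices are adjacent to both $i$ and $j$; here $D\neq\emptyset$ because $p\le n-t+2$. Write $M=C\cup D$, so $|M|=n-t+2$. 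The total number of vertices is $(t-3)+(n-t+4)-1=n$, and $i,j$ are nonadjacent.

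The main step, which I expect to be the only real obstacle, is to show that
\[
\mathcal{C}(G)=\{\emptyset,\{i\},\{i,j\},M\}.
\]
I would argue vertex by vertex using the cut point property.

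A vertex of $A\cup B$ can never lie in some $T\in\mathcal{C}(G)$. A leaf is never a cut point. A vertex of $B$ has all its neighbours adjacent to $i$, so it can only be a cut point if $i\notin T$; but in that case its neighbours stay connected through $i$ after it is removed.

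The vertex $j$ can lie in $T$ only together with $i$. Indeed, if $i\notin T$, then all neighbours of $j$ lie in $M$ and are adjacent to $i$.

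Now let $v\in M\cap T$. If $i\in T$ or $j\in T$, then all remaining neighbours of $v$ are adjacent to the surviving vertex among $i,j$, so $v$ is not a cut point. Hence $i,j\notin T$. Then $v$ is a cut point of $G|_{([n]-T)\cup\{v\}}$ exactly when $v$ is the only vertex of $M$ outside $T$, that is, when $v$ is the only remaining connection between $i$ and $j$. Since this must hold for every $v\in T\cap M$, we get $T\supseteq M$, and then $T=M$.

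Conversely, I would verify that each of $\{i\}$, $\{i,j\}$ and $M$ has the cut point property. This is immediate from the structure. Since $\ell+p\ge 3$, the vertex $i$ reconnects several components; since $p\ge2$, the vertex $j$ reconnects at least two components on the $L$-side.

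Next I would count components:
\begin{itemize}
\item $c_G(\{i\})=\ell+1$: the $\ell-1$ leaves and $B$ on the $H$-side, and the $L$-side, which is still joined through $j$.
\item $c_G(\{i,j\})=\ell+p$: the pieces $A$ and $B$, together with $p-1$ isolated vertices of $C$ and the clique $D$.
\item $c_G(M)=2$: the $H$-side together with $i$, and $\{j\}$.
\end{itemize}
Then \eqref{dimension formula} gives
\[
\dim(S/J_G)=\max\{n+1,\ n+\ell,\ n+\ell+p-2,\ t\}=n+\ell+p-2,
\]
using $p\ge2$ and $t\le n$.

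For the ``in particular'' statement, take $t=6$, which is allowed since $n\ge6$. Then $\ell\in\{1,2\}$ and $p\in\{2,\dots,n-4\}$. Choosing $\ell=1$ and letting $p$ run over $2,\dots,n-4$ gives $\ell+p-2$ ranging over $1,\dots,n-5$, and $\ell=2$, $p=n-4$ gives $n-4$. Hence every $d$ with $n+1\le d\le 2n-4$ is realized by $d=n+\ell+p-2$.
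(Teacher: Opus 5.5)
Your proposal is correct and follows the paper's proof: determine $\mathcal{C}(G^t_{\ell,p})=\{\emptyset,\{i\},\{i,j\},M\}$ and maximize $n-|T|+c_G(T)$. The paper only asserts this description of $\mathcal{C}$, whereas you verify it; two cases are glossed over in your verification --- $b\in B$ with $i\in T$, and $v\in M\cap T$ with both $i,j\in T$ --- but both follow at once because the remaining neighbours lie in the clique $B$, resp.\ $D$. For the ``in particular'' clause, $t=6$ suffices as stated, while the paper keeps $t$ arbitrary and uses $\ell=1$, $p=2,\dots,n-t+1$ together with $p=n-t+2$, $\ell=1,\dots,t-4$, which is what Corollary~\ref{remaining cases} needs for every $t$.
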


\begin{proof}
	First, observe that 
	\[
	\mathcal{C}(G^t_{\ell,p})=\{\emptyset, \{i\}, \{i,j\}, V(\tilde{L}_{t,p})\}.
\]
It then follows from \eqref{dimension formula} that 
	\[
	\dim (S/J_{G^t_{\ell,p}})=\max \big{\{}n+1, n+\ell, n+\ell+p-2, t\big{\}}=n+\ell+p-2,
	\] 
	since $\ell\geq 1$, $p\geq 2$ and $n\geq t$. 
	In particular, one has 
	\[
	\dim (S/J_{G^t_{1,p}})=n+p-1
	\] 
	for $p=2,\ldots, n-t+1$ and 
	\[
	\dim (S/J_{G^t_{\ell,n-t+2}})=2n-t+\ell
	\] 
	for $\ell=1,\ldots, t-4$. This in particular implies that, given integers $n,d$ with $n\geq 6$ and $n+1\leq d\leq 2n-4$, there exists a finite connected graph $G$ on $n$ vertices with $\dim (S/J_G)=d$, as desired.
    \hspace{8.7cm}
\end{proof}

In \cite{KS}, the class of \emph{generalized block graphs} is defined as a generalization of the class of block graphs.  Let $G$ be a chordal graph with the property that for every three maximal cliques of $G$ whose sets of vertices have a nonempty intersection, the intersection of the vertex sets of each pair of them is same (\cite[Figure~1 and Figure~2]{KS}). Every block graph is also a generalized block graph. An explicit formula for $\depth(S/J_G)$ is known for a generalized block graph $G$ (\cite[Theorem~3.2]{KS}).  A \emph{cut set} of a finite graph $G$ is a subset $C$ of $V(G)$ for which the induced subgraph on $V(G) \setminus C$ is disconnected.  Furthermore, a \emph{minimal cut set} of $G$ is a cut set which is minimal under inclusion.  The \emph{clique number} of $G$ is the maximum size $\omega(G)$ of the cliques of $G$.  Let $G$ be a generalized block graph on $[n]$. For  
$i = 1,\ldots,\omega(G)-1$, set 
\[
\mathcal{A}_i(G) = \{A \subseteq [n] : |A| = i, A~\textit{is~a~minimal~cut~set~of}~G\}.
\]

\begin{Theorem}[\cite{KS}]
\label{generalized block}
	Let $G$ be a finite connected generalized block graph on $[n]$. Then
	\[
	\depth(S/J_G) = n+1-\sum_{i=2}^{\omega(G)-1} (i-1)|\mathcal{A}_i(G)|.
	\] 
\end{Theorem}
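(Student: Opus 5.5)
We would argue by induction on the number $m(G)=\sum_{i\ge 2}|\mathcal{A}_i(G)|$ of minimal cut sets of size at least $2$, with the total number of minimal cut sets as a secondary parameter. The two tools are the gluing formula for decomposable graphs and Ohtani's exact sequence.

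\textbf{Gluing formula.} If $G=G_1\cup G_2$ with $V(G_1)\cap V(G_2)=\{v\}$ and $v$ a free vertex of both $G_1$ and $G_2$, then $J_G$ is, after a linear change of coordinates, essentially a sum of ideals in disjoint sets of variables glued along one vertex. Hence
\[
\depth(S/J_G)=\depth(S_1/J_{G_1})+\depth(S_2/J_{G_2})-2 .
\]

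\textbf{Base case.} If $m(G)=0$, every minimal cut set of $G$ is a single vertex. For a chordal graph this means any two maximal cliques meet in at most one vertex, so $G$ is a block graph. Theorem~\ref{block} then gives $n+1$, which agrees with the formula since the sum is empty. The same gluing argument also shows that cut vertices do not contribute: splitting $G$ at a cut vertex into generalized block graphs $G_1,G_2$ adds the right-hand sides correctly, because $(n_1+1)+(n_2+1)-2=n+1$ and the sets $\mathcal{A}_i$ are partitioned between the pieces.

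\textbf{Inductive step.} Assume $m(G)\ge 1$ and pick a minimal cut set $A$ with $|A|=i\ge 2$. In a generalized block graph, $A$ is the common intersection of all the maximal cliques $F_1,\ldots,F_r$ ($r\ge 2$) that contain it. The plan is to remove the vertices of $A$ one at a time using Ohtani's lemma. For a non-free vertex $v\in A$ there is the short exact sequence
\[
0\to S/J_G\to S/J_{G_v}\oplus S/\bigl((x_v,y_v)+J_{G-v}\bigr)\to S/\bigl((x_v,y_v)+J_{G_v-v}\bigr)\to 0,
\]
where $G_v$ is obtained from $G$ by completing $N_G(v)$ into a clique. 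The three graphs appearing here have the following structure.
\begin{enumerate}
\item In $G_v$, the cliques $F_1,\ldots,F_r$ merge into one clique. Hence $G_v$ is a generalized block graph with $A$ no longer a cut set, and its other cut sets are unchanged.
\item $G-v$ is a generalized block graph on $n-1$ vertices in which $A$ is replaced by $A\setminus\{v\}$, a minimal cut set of size $i-1$.
\item $G_v-v$ again loses $A$ and has $n-1$ vertices.
\end{enumerate}
By induction, the depths of these three quotients are $D+(i-1)$, $(D+(i-1))-2+(i-2)$ and $D+(i-1)-2$ respectively, writing $D$ for the conjectured value for $G$. Here the variables $x_v,y_v$ account for a loss of $1$ in the dimension count relative to $n$; the exact bookkeeping has to be checked.

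The depth lemma then gives $\depth(S/J_G)\ge\min\{\text{middle},\ \text{right}+1\}$. Equality $\depth(S/J_G)=\text{right}+1$ follows if the right-hand depth is strictly smaller than the middle one. This comparison should yield exactly a drop of $i-1$ per cut set of size $i$: one unit for each of the $i-1$ vertices removed before $A$ collapses to a single vertex. Once $A$ has been reduced to a single vertex it no longer contributes, which matches the term $(i-1)|\mathcal{A}_i(G)|$.

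\textbf{Main obstacle.} The hard part is the exact equality in the depth lemma, namely ruling out the case where the middle term forces a larger depth. This needs sharp values for $\depth\bigl(S/((x_v,y_v)+J_{G-v})\bigr)$. The difficulty is that removing $v$ may disconnect $G-v$ only when $|A|=1$, so one must track carefully that $G-v$ stays connected and remains a generalized block graph. If the direct comparison fails, the fallback is to establish the upper bound separately. One route is a local cohomology or Hochster-type computation on the initial ideal, which is squarefree for these closed-like graphs. Another is to produce an explicit nonvanishing $\Ext$ in the relevant degree coming from the cut set $A$, analogous to the argument behind the upper bound $n-\kappa(G)+2$ in~\eqref{depth}.
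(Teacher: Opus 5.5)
The paper does not prove this statement. It quotes it from \cite{KS} and uses it as a black box, so your proposal has to stand on its own, and as written it does not.

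Put $\Sigma(G)=\sum_{i\ge 2}(i-1)|\mathcal{A}_i(G)|$ and $D=n+1-\Sigma(G)$. In a generalized block graph, the maximal cliques through a non-free vertex $v$ pairwise meet in one and the same set. Hence $v$ lies in exactly one minimal cut set $A$, and the cliques through $v$ are exactly those containing $A$. From this one checks the following:
\begin{itemize}
\item $G_v$, $G-v$ and $G_v-v$ are generalized block graphs on $n$, $n-1$ and $n-1$ vertices.
\item Their values of $\Sigma$ are $\Sigma(G)-(i-1)$, $\Sigma(G)-1$ and $\Sigma(G)-(i-1)$.
\item Hence their depths are $D+i-1$, $D$ and $D+i-2$, not the values you list.
\end{itemize}
Two things follow. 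First, your induction parameter fails: for $i\ge 3$ the graph $G-v$ has the same number of minimal cut sets of each kind as $G$, so you should induct on $\Sigma(G)$ instead. Second, the scenario you rely on never occurs. The right-hand depth $D+i-2$ is never smaller than the middle depth $\min\{D+i-1,D\}=D$. For $i\ge 3$ the step is rescued by the other half of the depth lemma: $\depth N<\depth P$ forces $\depth(S/J_G)=\depth N=D$. So the depth stays fixed while a vertex is deleted, which is how the deficit $i-1$ accumulates; it does not come from ``right $+1$''.

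The genuine gap is the case $i=2$. There the middle and right-hand terms both have depth $D$, and the sequence yields only $\depth(S/J_G)\ge D$. Choosing $v$ more cleverly does not help. Take the graph with maximal cliques $\{1,2,3\},\{2,3,4,5\},\{4,5,6\}$:
\begin{itemize}
\item Every non-free vertex lies in a $2$-element minimal cut set.
\item The formula predicts $5$. This is correct, e.g.\ by Theorem~\ref{depth=5} with the nonadjacent pair $1,6$, $V_0=\emptyset$, $V_1=\{2,3\}$, $V_2=\{4,5\}$.
\item Every Ohtani sequence at a non-free vertex gives only $\ge 5$.
\item The general upper bounds $n-\kappa(G)+2$ and $\depth(S/J_G)\le\dim S/P$ for $P\in\Min(J_G)$ both give only $6$.
\end{itemize}
Chasing the long exact sequences of local cohomology, using $\depth(S/J_{G_v})=D+1$ and $\depth(S_v/J_{G_v-v})=D$, gives $H^D_{\mathfrak m}(S/J_G)\cong H^D_{\mathfrak m}(J_{G_v-v}/J_{G-v})$ when $i=2$. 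So what is missing is a real nonvanishing statement for this module, or some other proof that $\depth(S/J_G)\le D$. Your fallback (a Hochster-type formula on the initial ideal, an explicit $\Ext$ class) only names a direction. The upper bound, which is the substantive half of the theorem, remains unproved.
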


Let $G$ be a finite graph on $[n]$ and $i \in V(G)$. 
Let $G_i$ denote the finite graph on $[n]$ whose edge set is 
\[
E(G)\cup \big{\{}\{a,b\}: \{a,b\}\subseteq N_G(i)\big{\}}.
\]

The following classical result is helpful to compute the depth of $S/J_{G^t_{\ell,p}}$. 

\begin{Lemma}\label{depth lemma}
	Let $R$ be a polynomial ring over a field $K$.  Let $M$, $N$ and $P$ be finitely generated graded $R$-modules. If 
	\[
	0\rightarrow M\rightarrow N\rightarrow P\rightarrow 0
	\]
	is a short exact sequence and if $\depth(N) > \depth(P)$, then
	\[
	\depth(M)=\depth(P)+1.
	\]
\end{Lemma}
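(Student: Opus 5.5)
We prove Lemma~\ref{depth lemma} with the long exact sequence of local cohomology and Grothendieck's characterization of depth. Let $\mathfrak m$ be the graded maximal ideal of $R$. For a nonzero finitely generated graded $R$-module $X$ we use
\[
\depth(X)=\min\{k: H^k_{\mathfrak m}(X)\neq 0\}.
\]
Since $\depth(N)>\depth(P)$ forces $P\neq 0$, and $M=0$ would give $N\cong P$, we may assume all three modules are nonzero (in the degenerate case the statement is understood as vacuous). Put $s=\depth(P)$.

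Apply $\Gamma_{\mathfrak m}$ to $0\to M\to N\to P\to 0$ to get the long exact sequence
\[
\cdots\to H^{k-1}_{\mathfrak m}(N)\to H^{k-1}_{\mathfrak m}(P)\to H^k_{\mathfrak m}(M)\to H^k_{\mathfrak m}(N)\to H^k_{\mathfrak m}(P)\to\cdots.
\]

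First take $k\le s$. Then $H^{k-1}_{\mathfrak m}(P)=0$, since $k-1<s$. Also $k\le s<\depth(N)$, so $H^k_{\mathfrak m}(N)=0$. Exactness at $H^k_{\mathfrak m}(M)$ therefore gives $H^k_{\mathfrak m}(M)=0$ for all $k\le s$, that is, $\depth(M)\ge s+1$.

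Next take $k=s+1$. The relevant piece is
\[
H^{s}_{\mathfrak m}(N)\to H^{s}_{\mathfrak m}(P)\to H^{s+1}_{\mathfrak m}(M).
\]
Here $H^s_{\mathfrak m}(N)=0$ because $s<\depth(N)$. Hence $H^s_{\mathfrak m}(P)$, which is nonzero by the choice of $s$, injects into $H^{s+1}_{\mathfrak m}(M)$. So $H^{s+1}_{\mathfrak m}(M)\neq 0$, and combined with the first step this gives $\depth(M)=s+1$.

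The only delicate point is the appeal to the local cohomology characterization of depth for graded modules over a polynomial ring. It is standard (Grothendieck), so no real obstacle arises. An alternative route avoids local cohomology: use $\depth X=\min\{k:\Ext^k_R(K,X)\neq0\}$ and the long exact $\Ext$ sequence. The same index bookkeeping goes through verbatim.
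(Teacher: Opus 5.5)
Your proof is correct. The vanishing of $H^{k-1}_{\mathfrak m}(P)$ and $H^{k}_{\mathfrak m}(N)$ for $k\le s$ gives $\depth(M)\ge s+1$, and the injection $H^{s}_{\mathfrak m}(P)\hookrightarrow H^{s+1}_{\mathfrak m}(M)$ gives equality.

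The paper gives no proof of its own; it quotes the statement as a classical fact, the standard depth lemma. Its usual proof runs through the long exact $\Ext_R(K,-)$ sequence, which is the alternative you mention.

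One wording point: the case $M=0$ is not ``vacuous''. It is excluded by the hypothesis, because $N\cong P$ would force $\depth(N)=\depth(P)$.
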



We are ready to compute the depth of the binomial edge ideal of $G^t_{\ell,p}$. 

\begin{Theorem}\label{final depth}
  Let $n,t,\ell,p$ be integers with $$6\leq t\leq n, \, 1\leq \ell \leq t-4, \, 2\leq p\leq n-t+2.$$ Then one has $\depth (S/J_{G^t_{\ell,p}})=t$.
\end{Theorem}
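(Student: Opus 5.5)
The plan is to use the standard short exact sequence attached to a non-free vertex and compute the depths of the three auxiliary rings. The two rings built from $G_i$ are handled by Theorem~\ref{generalized block} and the one built from $G-i$ by Theorem~\ref{block}. Throughout write $G=G^t_{\ell,p}$ and let $i$ denote the vertex obtained by identifying $i$ and $q$. Then
\[
N_G(i)=V(\tilde{H}_{t,\ell})\cup V(\tilde{L}_{t,p}),
\]
and $i$ is not a free vertex of $G$, since $H(t,\ell)$ and $L(t,p)$ meet only in $i$. By the known decomposition for a non-free vertex (Ohtani's lemma, as used in \cite{EHH} and \cite{KS}),
\[
J_G=J_{G_i}\cap\big((x_i,y_i)+J_{G-i}\big), \qquad J_{G_i}+(x_i,y_i)+J_{G-i}=(x_i,y_i)+J_{G_i-i}.
\]
This gives the short exact sequence
\[
0\to S/J_G\to S/J_{G_i}\oplus S/((x_i,y_i)+J_{G-i})\to S/((x_i,y_i)+J_{G_i-i})\to 0 .
\]
The goal is to show that the middle module has depth $t$ and the right module has depth $t-1$; Lemma~\ref{depth lemma} then gives $\depth(S/J_G)=t$.

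First, for $G_i$: the vertex set $N_G(i)\cup\{i\}$ of size $n-1$ becomes a clique, and the only other vertex is $j$, adjacent exactly to $V(\tilde{L}_{t,p})$. So $G_i$ is chordal with exactly two maximal cliques, $N_G[i]$ and $\{j\}\cup V(\tilde{L}_{t,p})$. These meet in $V(\tilde{L}_{t,p})$, which has size $n-t+2\ge 2$ because $p\geq 2$. Hence $G_i$ is a generalized block graph whose unique minimal cut set is $V(\tilde{L}_{t,p})$, and Theorem~\ref{generalized block} yields
\[
\depth(S/J_{G_i})=n+1-(n-t+1)=t.
\]
Similarly, $G_i-i$ is a connected generalized block graph on $n-1$ vertices with the same unique minimal cut set. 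Theorem~\ref{generalized block}, applied in the polynomial ring without $x_i,y_i$, gives depth $(n-1)+1-(n-t+1)=t-1$, so $\depth S/((x_i,y_i)+J_{G_i-i})=t-1$.

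Next, $G-i$ is a disjoint union of three kinds of components, and the depth of the quotient is the sum of the depths over the components:
\begin{itemize}
\item $\ell-1$ isolated vertices, each contributing $2$;
\item the complete graph $K_{t-\ell-3}$, nonempty because $\ell\le t-4$, contributing $t-\ell-2$;
\item the graph $\{j\}*\tilde{L}_{t,p}$, a connected block graph on $n-t+3$ vertices, contributing $n-t+4$ by Theorem~\ref{block}.
\end{itemize}
Summing gives $\depth S/((x_i,y_i)+J_{G-i})=n+\ell\ge n+1>t$. The middle term therefore has depth $\min\{t,n+\ell\}=t>t-1$, and Lemma~\ref{depth lemma} gives $\depth(S/J_G)=(t-1)+1=t$.

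The main obstacle is justifying the exact sequence, in particular the equality $J_{G_i}+(x_i,y_i)+J_{G-i}=(x_i,y_i)+J_{G_i-i}$. This requires checking that every generator of $J_{G_i}$ involving $i$ dies modulo $(x_i,y_i)$ and that the remaining generators are exactly the edges of $G_i-i$. I would quote this from the literature rather than reprove it. A secondary check is that the graphs $G_i$ and $G_i-i$ really satisfy the generalized block condition and have no other minimal cut sets, including in the boundary case $n=t$, where $\tilde{L}_{t,p}$ consists of one isolated vertex and a single clique vertex.
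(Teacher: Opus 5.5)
Your proposal is correct and follows essentially the same route as the paper. Both use Ohtani's decomposition $J_G=J_{G_i}\cap((x_i,y_i)+J_{G-i})$ at the glued vertex, Theorem~\ref{generalized block} for $G_i$ (depth $t$) and for $G_i-i$ (depth $t-1$), Theorem~\ref{block} together with additivity over the components of $G-i$ (depth $n+\ell$), and then Lemma~\ref{depth lemma}; your extra checks (non-freeness of $i$, uniqueness of the minimal cut set $V(\tilde{L}_{t,p})$, and the boundary case $n=t$) are details the paper leaves implicit.
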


\begin{proof}
  Set $G:=G^t_{\ell,p}$.  It follows from \cite[Lemma~4.8]{O} that $$J_G=J_{G_i}\cap ((x_i,y_i)+J_{G-i})$$ and hence one has the short exact sequence  
  \[
  0 \rightarrow S/J_G \rightarrow S/J_{G_i}\oplus S_i/J_{G-i} \rightarrow S_i/J_{G_i-i} \rightarrow 0,
  \]  	
  where $S_i$ is the polynomial ring in the same variables as $S$ except $x_i$ and $y_i$. 
  
  First, observe that $G_i$ is the graph consisting of two maximal cliques which have exactly $n-t+2$ common vertices. In particular, $G_i$ is a generalized block graph and hence one has $\depth(S/J_{G_i})=n+1-(n-t+1)=t$ (Theorem~\ref{generalized block}). 
  
  It is clear that $G-i$ is just the disjoint union of $H^{(1)}=K^c_{\ell-1}$, 
  $H^{(2)}=K_{t-\ell-3}$ and $H^{(3)}=L(t,p)-i$. Therefore, one has 
  $\depth(S_i/J_{G-i})= \sum_{s=1}^3 \depth(R_s/J_{H^{(s)}})$, where $R_s$ is the polynomial ring over $K$ in the variables correspond to the vertices of $H^{(s)}$. On the other hand, since $L(t,p)-i$ is a block graph, it follows from Theorem~\ref{block} that
  \begin{eqnarray*}
      &\depth(R_1/J_{H^{(1)}})=2\ell-2,\, \, \,  
      \depth(R_2/J_{H^{(2)}})=t-\ell-2,& \\
      &\depth(R_3/J_{H^{(3)}})=n-t+4.& 
  \end{eqnarray*}
    It then follows that $\depth(S_i/J_{G-i})=n+\ell$. 
  
Since 
  \[
  \depth(S/J_{G_i}\oplus S_i/J_{G-i})=
  \min\{\depth(S/J_{G_i}), \depth(S_i/J_{G-i})\}
      =\min\{t, n+\ell\}
  \] 
and since $t\leq n$ and $\ell\geq 1$, one has 
  \begin{equation}\label{depth N}
   \depth(S/J_{G_i}\oplus S_i/J_{G-i})=t.
  \end{equation}
  
  Finally, observe that $G_i-i$ is a generalized block graph on $n-1$ vertices whose unique minimal cut set is of cardinality~$n-t+2$, and it follows from Theorem~\ref{generalized block} that $\depth(S_i/J_{G_i-i})=(n-1)+1-(n-t+1)=t-1$. It then follows from \eqref{depth N} and Lemma~\ref{depth lemma} that $\depth(S/J_G)=t$, as desired. 
  \hspace{5.5cm}
\end{proof}

Now, 
combining Theorem~\ref{final dim} and Theorem~\ref{final depth}, it follows that

\begin{Corollary}\label{remaining cases}
	Given integers $t,n,d$ with $6\leq t\leq n$ and $n+1\leq d\leq 2n-4$, there exists a finite connected graph $G$ on $n$ vertices with $\depth(S/J_G)=t$ and $\dim(S/J_G)=d$.
\end{Corollary}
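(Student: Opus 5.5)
The plan is to combine Theorem~\ref{final dim} and Theorem~\ref{final depth} applied to the graphs $G^t_{\ell,p}$. Fix integers $t,n,d$ with $6\leq t\leq n$ and $n+1\leq d\leq 2n-4$. Theorem~\ref{final depth} gives $\depth(S/J_{G^t_{\ell,p}})=t$ for every admissible pair $(\ell,p)$, that is, $1\leq \ell\leq t-4$ and $2\leq p\leq n-t+2$. Theorem~\ref{final dim} gives $\dim(S/J_{G^t_{\ell,p}})=n+\ell+p-2$. The graphs are connected, since they are obtained by gluing the connected graphs $H(t,\ell)$ and $L(t,p)$ at a vertex. So it suffices to show that, with $t$ fixed, the quantity $\ell+p-2$ takes every value in $\{1,\ldots,n-4\}$ as $(\ell,p)$ ranges over the admissible rectangle.

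This is a sumset computation. The set of admissible $\ell$ is the integer interval $[1,t-4]$, which is nonempty because $t\geq 6$. The set of admissible $p$ is $[2,n-t+2]$, which is nonempty because $t\leq n$. The sum of two integer intervals is again an integer interval, so $\ell+p-2$ ranges over exactly $[1,(t-4)+(n-t+2)-2]=[1,n-4]$.

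To make this concrete, take $\ell=1$ and let $p$ run through $2,\ldots,n-t+2$; this yields $d=n+1,\ldots,2n-t+1$. Then take $p=n-t+2$ and let $\ell$ run through $1,\ldots,t-4$; this yields $d=2n-t+1,\ldots,2n-4$. Together these cover every $d$ with $n+1\leq d\leq 2n-4$. This is essentially the argument already sketched at the end of the proof of Theorem~\ref{final dim}. The difference is that here $t$ is held fixed throughout, so that Theorem~\ref{final depth} applies to every graph chosen.

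There is no genuine obstacle, since the substantive work is done in Theorems~\ref{final dim} and~\ref{final depth}. The only point to check is that the ranges of $\ell$ and $p$ are nonempty and that their extreme values reach both ends, $n+1$ and $2n-4$. Both follow directly from $6\leq t\leq n$.
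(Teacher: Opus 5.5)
Your proposal is correct and is the paper's argument. You fix $t$, take the graphs $G^t_{\ell,p}$, get depth $t$ from Theorem~\ref{final depth} and dimension $n+\ell+p-2$ from Theorem~\ref{final dim}, then vary $p$ with $\ell=1$ and vary $\ell$ with $p=n-t+2$ to cover every $d$ from $n+1$ to $2n-4$. Your remarks on connectivity and on holding $t$ fixed make explicit what the paper leaves implicit.
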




\end{document}